\documentclass{amsart} 
\usepackage{amssymb}
\usepackage{amsmath}
\usepackage{amsfonts}


\sloppy

\begin{document}
\newtheorem{theo}{Theorem}[section]
\newtheorem{atheo}{Theorem*}
\newtheorem{prop}[theo]{Proposition}
\newtheorem{aprop}[atheo]{Proposition*}
\newtheorem{lemma}[theo]{Lemma}
\newtheorem{alemma}[atheo]{Lemma*}
\newtheorem{exam}[theo]{Example}
\newtheorem{coro}[theo]{Corollary}
\theoremstyle{definition}
\newtheorem{defi}[theo]{Definition}
\newtheorem{rem}[theo]{Remark}


\newcommand{\Bb}{{\bf B}}
\newcommand{\Cb}{{\mathbb C}}
\newcommand{\Nb}{{\mathbb N}}
\newcommand{\Qb}{{\mathbb Q}}
\newcommand{\Rb}{{\mathbb R}}
\newcommand{\Zb}{{\mathbb Z}}
\newcommand{\Ac}{{\mathcal A}}
\newcommand{\Bc}{{\mathcal B}}
\newcommand{\Cc}{{\mathcal C}}
\newcommand{\Dc}{{\mathcal D}}
\newcommand{\Fc}{{\mathcal F}}
\newcommand{\Ic}{{\mathcal I}}
\newcommand{\Jc}{{\mathcal J}}
\newcommand{\Kc}{{\mathcal K}}
\newcommand{\Lc}{{\mathcal L}}
\newcommand{\Oc}{{\mathcal O}}
\newcommand{\Pc}{{\mathcal P}}
\newcommand{\Sc}{{\mathcal S}}
\newcommand{\Tc}{{\mathcal T}}
\newcommand{\Uc}{{\mathcal U}}
\newcommand{\Vc}{{\mathcal V}}

\author{Nik Weaver}

\title [Paradoxes of rational agency]
       {Paradoxes of rational agency and formal systems that verify their own soundness}

\address {Department of Mathematics\\
Washington University\\
Saint Louis, MO 63130}

\email {nweaver@math.wustl.edu}

\date{\em Dec.\ 20, 2013}

\thanks{Partially supported by NSF grant DMS-1067726. I wish to thank the
participants of the December workshop on ``Reflection in Logic'' at MIRI
for a stimulating discussion on the topic of this paper.}


\begin{abstract}
We consider extensions of Peano arithmetic which include an {\it assertibility}
predicate. Any such system which is arithmetically sound effectively
verifies its own soundness. This leads to the resolution of a range of
paradoxes involving rational agents who are licensed to act under
precisely defined conditions.
\end{abstract}

\maketitle

\section{Paradoxes of rational agency}

Let $S$ be a recursively axiomatized formal system that interprets
Peano arithmetic (PA). The soundness of $S$ is characterized by the scheme
$${\rm Prov}_S\ulcorner A(\bar{n}_1, \ldots, \bar{n}_k)\urcorner
\to A(n_1, \ldots, n_k)\eqno{(*)}$$
(``if $A$ is provable within $S$, then $A$''), with $A$ ranging over the
formulas of the language of $S$. Here $n_1, \ldots, n_k$ are the free
variables of $A$, $\bar{n}$ is the $n$th numeral, $\ulcorner A\urcorner$
is the
G\"odel number of $A$, and ${\rm Prov}_S\ulcorner A\urcorner$ is some standard
arithmetical formulation of the assertion that $A$ is a theorem of $S$.

The assertion that $S$ is consistent is effectively the special case of
($*$) when $A$ is any sentence which is provably false in $S$. Thus,
according to G\"odel's second incompleteness theorem, if $S$ is consistent
then it cannot prove any such instance of ($*$). Indeed, L\"ob's theorem
asserts that if $S$ is consistent then it can only prove
${\rm Prov}_S\ulcorner A\urcorner \to A$ when it can prove $A$.

The inability of a rational agent who reasons within $S$ to affirm the
soundness, or even the consistency, of $S$ is an unhappy but familiar
phenomenon. However, in a recent paper \cite{YH} Yudkowsky and Herreshoff
observe that this phenomenon carries a sharper sting in the context of an
AI that is licensed to act under precisely defined conditions. Thus,
imagine an intelligent machine $M$ that is capable of reasoning
within $S$ and is licensed to perform some action $\alpha_0$ when it has
verified the truth of some sentence $A_0$. We now describe a variety of
situations, mostly adapted from Section 3 of \cite{YH}, in which $M$ is
paradoxically unable to justify performing $\alpha_0$ even though it
intuitively ought to be able to do so.

\subsection{Naturalistic trust}

Suppose $M$ decides to improve its performance by constructing
an assistant $M'$ whose job is to prove theorems for $M$. $M$ programs the
assistant to reason within the same system $S$ that $M$ reasons in, and
to alert $M$ when it has proven the sentence $A_0$ (or any other sentence
which expresses an actionable criterion for $M$).

The paradox is that
even though $M$ knows that $M'$ reasons within $S$, it cannot act on
the information that $M'$ has proven $A_0$. Knowing this tells $M$ only
that $A_0$ is provable in $S$, not that $A_0$ is true, and lacking
the soundness scheme ($*$) it cannot infer $A_0$ from
${\rm Prov}_S\ulcorner A_0\urcorner$. In fact nothing $M'$ can tell
$M$, short of a line by line account of the actual proof, could
convince $M$ that $A_0$ is true. So apparently the best $M$
can do in this situation is to retrieve the formal proof of $A_0$
from $M'$ and check it line by line, thereby establishing $A_0$ to
$M$'s own satisfaction and licensing the action $\alpha_0$.

The puzzle here is that $M$ knows perfectly well what the outcome of
this verification is going to be, but still has no way to get around
executing it in its entirety. L\"ob's theorem prevents $M$ from
``trusting'' an agent in its environment, even granting that $M$ has
perfect knowledge that the agent reasons correctly within $S$.

\subsection{Reflective trust}

Alternatively, there can be situations in which it might be relatively easy
for $M$ to prove the statement ${\rm Prov}_S\ulcorner A_0\urcorner$, and
even to produce an algorithm which provably generates a formal proof of $A_0$,
but unfeasably difficult for $M$ to actually execute that algorithm. The
formal proof of $A_0$ might be astronomically long, for example, even while
the proof that it can be constructed is quite short. Again, $M$ finds itself
in a situation where it ``knows'' that it can prove $A_0$, but is unable to
act on this knowledge without first performing some tedious or even
unfeasable computation
whose result is known in advance. Evidently $M$ cannot even trust itself.

The paradoxes of naturalistic and reflective trust are both straightforward
expressions of $M$'s inability to affirm the soundness scheme ($*$),
and both of them could be easily handled by modifying its licensing
criteria. All we have to do is to program $M$ so that whenever it
accepts a sentence $A$ as a license to perform the action $\alpha$, it
also accepts ${\rm Prov}_S\ulcorner A\urcorner$ as a license to perform
$\alpha$. This would allow it to act on the knowledge that $A$ is provable
--- knowledge that it might derive on its own or obtain from an outside
source --- without actually possessing a proof of $A$.
Next we present two additional difficulties which cannot be
handled in this way.

\subsection{Reflectively coherent trust}

Suppose the actionable condition has the form $(\forall n)A_0(n)$. It
could be the case that $M$ is able to prove each instance $A_0(\bar{n})$,
yet not able to prove the
quantified statement $(\forall n)A_0(n)$.

So far, there is nothing worrisome about this possibility. It could easily
happen that each instance $A_0(\bar{n})$ is verifiable by a finite
computation, yet there is no uniform reason why $A_0(n)$ is true for every
$n$. (Think of assertions like ``the first through $n$th digits in the
decimal expansion of $\pi$ do not contain a string of 100 consecutive 9's''.)
But suppose in addition that $M$ knows that for each $n$ it can prove
$A_0(\bar{n})$. That is, suppose $M$ has proven the sentence
$$(\forall n){\rm Prov}_S\ulcorner A_0(\bar{n})\urcorner.$$
Without knowing that $S$ is sound, $M$ cannot go on to infer the
condition $(\forall n)A_0(n)$ which allows it to act.

Yudkowsky and Herreshoff ask whether it is possible to design the
formal system $S$ in a way that evades this problem. That is, can
$S$ have the property that whenever
$(\forall n){\rm Prov}_S\ulcorner A(\bar{n})\urcorner$ is a theorem of
$S$, it is also the case that $(\forall n)A(n)$ is a theorem of $S$?

Unfortunately, the answer is no. No consistent recursively axiomatized
system which interprets Peano arithmetic is reflectively coherent in
this sense. For let $S$ be any recursively axiomatized system which
interprets PA. We claim that we can prove, in $S$, the statement
$(\forall g){\rm Prov}_S\ulcorner A(\bar{g})\urcorner$ where $A(g)$ is
a standard arithmetization of the assertion ``$g$ is not the G\"odel
number of a proof in $S$ of $0 = 1$''. This is done as follows. Reasoning
informally in $S$, we argue that for each $g$, either $g$ is not a proof
of $0 = 1$, in which case this fact can be verified by a mechanical finite
computation and hence is trivially provable in $S$, or else $g$ is a proof
of $0 = 1$, in which case $S$ is inconsistent and therefore proves anything.
(This argument appears in \cite{F}, Section 2 (d).) Thus, reflective
coherence would entail that $S$ can prove its own consistency, contradicting
G\"odel's second incompleteness theorem as improved by Rosser.

However, it still seems reasonable to expect $M$ to be able to accept a
proof of $(\forall n){\rm Prov}_S\ulcorner A(\bar{n})\urcorner$ as
licensing the same actions that are licensed by a proof of $(\forall n)A(n)$.
The point here is that agreeing to accept
${\rm Prov}_S\ulcorner (\forall n)A(n)\urcorner$ as a licensing condition,
as proposed above, does not accomplish this.
We should also note that the issue is no longer about feasability:
it may simply be impossible for $M$ to convince itself that
$(\forall n)A(n)$ holds, despite knowing that
$(\forall n){\rm Prov}_S\ulcorner A(\bar{n})\urcorner$.

\subsection{Disjunctive trust}

Finally, suppose the actionable condition is the sentence $A_0$
and we follow the suggestion made above to program $M$ to also accept
${\rm Prov}_S\ulcorner A_0\urcorner$,
${\rm Prov}_S\ulcorner{\rm Prov}_S\ulcorner A_0\urcorner\urcorner$,
etc., as licensing the same action. This does not accomodate the possibility
that $M$ might prove the sentence
$A_0 \vee {\rm Prov}_S\ulcorner A_0\urcorner$. (This formulation of the
disjunctive trust problem was suggested to me by Cameron Freer.)
Intuitively, the preceding sentence tells us
that either $A_0$ is true, which licenses action, or else $A_0$ is
provable and therefore true, which also licenses action. So
$A_0 \vee {\rm Prov}_S\ulcorner A_0\urcorner$ ``should'' license action,
but it does not. We cannot straightforwardly infer either $A_0$
or ${\rm Prov}_S^k\ulcorner A_0\urcorner$ for any $k$.

A natural idea is to augment $S$ with a truth predicate. This would
enable us to formalize the reasoning we just used which allowed
us to infer the truth of $A_0$ from the provability of
$A_0 \vee {\rm Prov}_S\ulcorner A_0\urcorner$. The problem is that
self-applicative truth predicates are inconsistent, whereas a non
self-applicative truth predicate would only apply to reasoning carried
out in the original system, not the augmented system. Thus $M$ could still
find itself in a situation where it has proven that it can prove $A_0$,
but is unable to infer $A_0$ because the anticipated proof of $A_0$ makes
use of the truth predicate. A partially self-applicative truth predicate
a la Kripke would not do any better; by L\"ob's theorem it is simply
impossible to consistently augment $S$ in any way that would enable us
to generally infer $A$ from ${\rm Prov}_{S^+}\ulcorner A\urcorner$, where
${\rm Prov}_{S^+}$ refers to provability in the augmented system $S^+$.

As we have seen,
the paradoxes of reflective coherence and disjunctive trust are not resolved
by broadening the licensing criteria so that ${\rm Prov}_S\ulcorner A\urcorner$
licenses any action that $A$ licenses. There can still be provable
assertions which ought to license actions but do not. Another possibility
is to simply ignore the provability predicate for licensing purposes;
that is, program $M$ so that whenever $A$ licenses $\alpha$, so does
any sentence which reduces to $A$ when all provability predicates are
removed. But this idea is a non-starter because provably true sentences can
become false when provability predicates are removed. For instance, we
can prove in PA that
$${\rm Prov}_{\rm PA}\ulcorner{\rm Con}({\rm PA})\urcorner
\to {\rm Prov}_{\rm PA}\ulcorner 0=1\urcorner$$
(if PA proves its own consistency, then it is inconsistent)
but after removing the provability predicates this becomes
$${\rm Con}({\rm PA}) \to 0=1,$$
i.e., $\neg{\rm Con}({\rm PA})$.

What we need is a principled method of broadening some initially given
licensing criteria that goes beyond merely accepting
${\rm Prov}\ulcorner A\urcorner$ in place of $A$.

\section{Formalizing assertibility}

We propose to handle the paradoxes of rational agency using the notion of
{\it assertibility} --- more precisely, {\it rational} or
{\it warranted} assertibility. This is a philosophical term which is
supposed to identify a property of sentences that expresses our right
to assert them. The idea is that a sentence is assertible if it has a
perfect rational justification or ``warrant''. In a mathematical context
the warrant may be thought of as a proof, in the semantic sense of
{\it argument that provides a perfect rational justification}, not in
the syntactic sense of {\it formal proof within some formal system}. In
intuitionism the word ``provability'' is used in the
former sense, and thus for our purposes is synonymous with assertibility.

One could object that this notion of assertibility or provability is either
altogether meaningless, or at best too vague to support precise analysis. A
philosophical argument can be made against this objection, but perhaps the
best answer is to simply exhibit a formal treatment of assertibility. An
axiomatization was given in \cite{W1}; it goes as follows.
Let $\Box$ be a predicate symbol which represents ``is assertible'' and
is to be applied to G\"odel numbers of sentences in some language
(which, crucially, might itself employ the symbol $\Box$).
There are five axioms for $\Box$,
\begin{enumerate}
\item $\Box\ulcorner A \vee B\urcorner \qquad\leftrightarrow\qquad
\Box \ulcorner A \urcorner \vee \Box \ulcorner B\urcorner$

\item $\Box\ulcorner A \wedge B\urcorner \qquad\leftrightarrow\qquad
\Box \ulcorner A\urcorner \wedge \Box \ulcorner B\urcorner$

\item $\Box\ulcorner (\exists n) A(n)\urcorner \qquad\leftarrow\qquad
(\exists n) \Box \ulcorner A(\bar{n})\urcorner$

\item $\Box\ulcorner(\forall n) A(n)\urcorner \qquad\leftrightarrow\qquad
(\forall n) \Box \ulcorner A(\bar{n})\urcorner$

\item $\Box\ulcorner A \to B\urcorner \qquad\to\qquad
\Box \ulcorner A\urcorner \to \Box \ulcorner B\urcorner$,
\end{enumerate}
and one axiom scheme, {\it capture}, which states
\begin{enumerate}
\setcounter{enumi}{5}
\item $A(n_1, \ldots, n_k)\qquad\to\qquad
\Box\ulcorner A(\bar{n}_1, \ldots, \bar{n}_k)\urcorner$
\end{enumerate}
for every formula $A$. Intuitionistic logic is to be used; we do not
assume that the law of excluded middle holds for formulas which involve $\Box$.

There is no separate axiom for negation. We treat negation as a derived
symbol, such that $\neg A$ is an abbreviation of $A \to \perp$ where $\perp$
is some canonical falsehood such as $0=1$.

The justification for these axioms is discussed in detail in \cite{W1}.
The main points are that the logic is intuitionistic and that the
{\it release} principle $\Box\ulcorner A\urcorner \to A$ is not included.
Excluded middle is suspect because
we cannot a priori assign truth values to all statements of the form
$\Box\ulcorner A\urcorner$: if putative proofs can refer to each other,
such that the validity of one hinges on the validity of the other, then
there is a potential circularity issue which could make the assignment of
truth values problematic. Similarly, whenever we have
actually proven that a sentence
$A$ is provable we ought to be able to rationally infer $A$,
but the implication $\Box \ulcorner A\urcorner \to A$ is suspect because it
globally affirms the validity of all proofs, including proofs in which it
itself might have been used, creating another circularity issue.

(The capture scheme is constructively valid because we can simply
stipulate that any proof of $A$ must be recognizable as a proof. Thus,
any proof of $A$ can trivially be converted into a proof that $A$ is
provable.)

A surprising feature of the $\Box$ operator is that it can be applied
self-referentially without producing a contradiction. For example, consider
a sentence $L$ that says of itself that it is not assertible. Thus, we have
$L \equiv \neg \Box \ulcorner L\urcorner$. Then we
can make the following deductions. First, assuming $L$ lets us
immediately infer $\neg\Box\ulcorner L\urcorner$, and it also lets
us infer $\Box \ulcorner L\urcorner$ via capture.
Thus $L$ entails a statement and its negation, so
we may conclude $\neg L$. We can further deduce $\Box\ulcorner\neg L\urcorner$.
However, there is no contradiction here. In particular, we cannot proceed to
infer $\neg \Box \ulcorner L\urcorner$, only the weaker statement
$\Box\ulcorner L\urcorner \to \Box \ulcorner\perp\urcorner$.

We do not merely claim that simple attempts to derive a contradiction
fail. We can actually give consistency proofs for systems which allow the
formulation of assertibility versions of the liar paradox and Russell's
paradox (\cite{W1} and \cite{W2}; see also \cite{W3}). See also Theorem
\ref{consistent} below.

It is interesting to note that including either the law of excluded middle or
a release axiom scheme would make assertibility reasoning paradoxical.
If we knew $\Box \ulcorner L \urcorner \to L$, then together with $\neg L$,
which we just proved, we could infer $\neg \Box \ulcorner L\urcorner$. But
that is equivalent to $L$, so we would have proven both $L$ and $\neg L$, a
contradiction. Or again,
if we knew $\Box\ulcorner L\urcorner \vee \neg \Box \ulcorner L\urcorner$
then we could reason as follows. First, assume $\Box\ulcorner L\urcorner$.
Since we have already proven
$\neg L$, we also have $\Box\ulcorner \neg L\urcorner$.
This yields $\Box \ulcorner L \wedge \neg L\urcorner$, and a short
argument then gives us $\Box \ulcorner\perp\urcorner$. On the other
hand, if we assume $\neg \Box\ulcorner L\urcorner$ then, as above,
we can infer $\perp$ and from this $\Box\ulcorner \perp\urcorner$.
Thus, if we had the law of excluded middle then we could prove
$\Box\ulcorner \perp\urcorner$. So excluded middle forces us to
affirm a contradiction.

\section{Self-verifying systems}

In what follows we employ intuitionistic logic and take
$\perp$ to be the formula $0 = 1$. Systems using classical
logic can be accomodated by including all instances of the law of excluded
middle as non-logical
axioms. Thus, for example, we treat Peano arithmetic as an
intuitionistic system but include as non-logical axioms all formulas of the
form $A \vee \neg A$ with $A$ a formula of first order arithmetic.

It is convenient to exclude formulas with free variables from our formal
proofs. Thus we require all axioms to be sentences and we express the
generalization deduction rules as the implications
$$(\forall n_1, \ldots, n_k)(A \to B) \to
(\forall n_2, \ldots, n_k)(A \to (\forall n_1)B)$$
(whenever $n_1$ is not free in $A$) and
$$(\forall n_1, \ldots, n_k)(A \to B) \to
(\forall n_2, \ldots, n_k)((\exists n_1)A \to B)$$
(whenever $n_1$ is not free in $B$).
The only deduction rule we need then is modus ponens in the form
$${\rm given}\quad(\forall n_1, \ldots, n_k)A\quad{\rm and}\quad
(\forall n_1, \ldots, n_k)(A \to B),\quad{\rm infer}\quad
(\forall n_1, \ldots, n_k)B.$$

In this section we assume
$S$ is a recursively axiomatized theory in the language of first
order arithmetic which extends PA. Define a formal system $S_\Box$ as follows.
Its language is the language of first order arithmetic enriched by a single
unary relation symbol $\Box$. Fix a G\"odel numbering for this language.
The {\it main} axioms of $S_\Box$ consist of the universal closures of
\begin{itemize}
\item the non-logical axioms of $S$,

\item all instances of the induction scheme and the logical axiom schemes
for formulas in the language of $S_\Box$, and

\item all of the axioms (1) -- (6) for $\Box$ given in the last section,
for the language of $S_\Box$.
\end{itemize}
(For details on how one would formalize, e.g., axioms (3) and (4) for
$\Box$, see Section 2 (c) of \cite{F}.)
Also, let ${\rm Ax}(g)$ be a formula in the language of first order arithmetic
which expresses that $g$ is the G\"odel number of one of the main axioms.
We can assume that whenever ${\rm Ax}(\bar{g})$ holds this is provable in PA.
Besides its main axioms, $S_\Box$ has one additional {\it jump} axiom given
by the formula
$$(\forall g)({\rm Ax}(g) \to \Box(g)).$$

Let ${\rm Prov}_{S_\Box}(g)$ be a standardly expressed formula stating that
$g$ is the G\"odel number of a theorem of $S_\Box$. (Recall that in our setup
every theorem is a sentence.) We now show that $S_\Box$ verifies its own
soundness in the sense that it proves a single statement which affirms that
every theorem of $S_\Box$ is assertible.

\begin{theo}\label{sound}
The sentences
$$(\forall g)({\rm Prov}_{S_\Box}(g) \to \Box(g))$$
(assertible soundness) and
$$\Box\ulcorner{\rm Con}(S_\Box)\urcorner$$
(assertible consistency) are provable in $S_\Box$.
\end{theo}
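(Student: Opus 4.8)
The plan is to establish assertible soundness first, by formalizing inside $S_\Box$ an induction on the length of $S_\Box$-proofs, and then to deduce assertible consistency from it by a case analysis of the kind used in \cite{F}. All reasoning below is carried out informally inside $S_\Box$; this is legitimate because the induction scheme and the $\Box$-axioms $(1)$--$(6)$ are available for every formula of the language of $S_\Box$, including formulas containing $\Box$.

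For assertible soundness I would prove $(\forall g)({\rm Prov}_{S_\Box}(g) \to \Box(g))$ by induction on the length of the proof witnessing ${\rm Prov}_{S_\Box}(g)$, examining the last line $g$. There are three cases. If $g$ is a main axiom then ${\rm Ax}(g)$ holds, and the jump axiom $(\forall g)({\rm Ax}(g) \to \Box(g))$ immediately yields $\Box(g)$. If $g$ is the jump axiom itself, write $J(n)$ for ${\rm Ax}(n) \to \Box(n)$; capture applied to $J$ gives $(\forall n)(J(n) \to \Box\ulcorner J(\bar n)\urcorner)$, so the jump axiom $(\forall n)J(n)$ yields $(\forall n)\Box\ulcorner J(\bar n)\urcorner$, and axiom $(4)$ converts this into $\Box\ulcorner(\forall n)J(n)\urcorner$, i.e. $\Box(g)$. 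Finally, if $g$ is the universal closure of $B$ obtained by modus ponens from the closures of $A$ and $A \to B$, then the induction hypothesis, applied to the two premises (which occur earlier in the proof), gives $\Box$ of both; axiom $(4)$ strips the outer quantifiers, axiom $(5)$ discharges the implication pointwise to give $\Box$ of each instance of $B$, and axiom $(4)$ reassembles $\Box(g)$. Since a provable sentence has a proof of some length, this yields the assertible soundness scheme.

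For assertible consistency I would rewrite ${\rm Con}(S_\Box)$ as $(\forall p)({\rm Proof}_{S_\Box}(p, \ulcorner\perp\urcorner) \to \perp)$ and, by axiom $(4)$, reduce the goal $\Box\ulcorner{\rm Con}(S_\Box)\urcorner$ to proving $\Box\ulcorner {\rm Proof}_{S_\Box}(\bar p, \ulcorner\perp\urcorner) \to \perp\urcorner$ for each $p$. Since ${\rm Proof}_{S_\Box}(p,\ulcorner\perp\urcorner)$ is a decidable arithmetic predicate, I split on excluded middle. When $\neg{\rm Proof}_{S_\Box}(\bar p,\ulcorner\perp\urcorner)$, the implication ${\rm Proof}_{S_\Box}(\bar p,\ulcorner\perp\urcorner) \to \perp$ is provably true, so capture boxes it directly. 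When ${\rm Proof}_{S_\Box}(\bar p, \ulcorner\perp\urcorner)$ holds we have ${\rm Prov}_{S_\Box}(\ulcorner\perp\urcorner)$, whence assertible soundness applied at $\ulcorner\perp\urcorner$ gives $\Box\ulcorner\perp\urcorner$; a short lemma then converts $\Box\ulcorner\perp\urcorner$ into $\Box\ulcorner C \to \perp\urcorner$ for any $C$ (apply capture to the logical axiom $\perp \to (C \to \perp)$ to obtain $\Box\ulcorner\perp \to (C\to\perp)\urcorner$, then use axiom $(5)$), which supplies the desired box with $C = {\rm Proof}_{S_\Box}(\bar p, \ulcorner\perp\urcorner)$. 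Either way the per-$p$ goal holds, and axiom $(4)$ assembles $\Box\ulcorner{\rm Con}(S_\Box)\urcorner$.

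The conceptually clean steps are the treatment of the jump axiom (capture plus axiom $(4)$) and the consistency case-split, which is essentially the argument of \cite{F} with $\Box$ in place of ${\rm Prov}_S$ and assertible soundness supplying the step that an inconsistent system renders everything assertible. The main obstacle I expect is the modus ponens case of the soundness induction: it must be carried out on the arithmetized proof predicate, so one has to verify that the arithmetic formalizations of axioms $(4)$ and $(5)$ interact correctly with the G\"odel-number operations of universal closure, numeral substitution, and formation of implications --- precisely the bookkeeping for which the text defers to Section 2(c) of \cite{F}. Making the quantifier and substitution codes line up uniformly in $p$, rather than merely for each fixed formula, is the delicate part; everything else is routine.
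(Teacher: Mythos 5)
Your proposal is correct, and its assertible-soundness half is essentially the paper's own argument: the jump axiom handles the main axioms, capture handles the jump axiom itself, and the axiom-(4)/(5)/(4) shuffle handles universally quantified modus ponens. (Your detour through the matrix $J(n)$ --- capture pointwise, then axiom (4) --- is harmless but longer than the paper's move, which just applies capture to the jump axiom as a single sentence $B$ to get $B \to \Box\ulcorner B\urcorner$.) The consistency half, however, takes a genuinely different and somewhat leaner route. The paper reuses its Section 1.3 observation that $S_\Box$ proves $(\forall g)\,{\rm Prov}_{S_\Box}\ulcorner A(\bar{g})\urcorner$, where $A(g)$ says $g$ is not a proof of $0=1$: that observation runs Feferman's case split entirely at the level of the provability predicate, and its ``not a proof'' case needs formalized $\Sigma_1$-completeness (a fact checkable by finite computation is provable); assertible soundness then converts ${\rm Prov}_{S_\Box}$ to $\Box$ uniformly and axiom (4) finishes. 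You instead run the same case split directly at the level of $\Box$: in the ``not a proof'' case capture boxes the case hypothesis immediately, so arithmetized $\Sigma_1$-completeness is never invoked, and in the ``is a proof'' case you use assertible soundness only at $\ulcorner\perp\urcorner$ and propagate with the lemma $\Box\ulcorner\perp\urcorner \to \Box\ulcorner C \to \perp\urcorner$ (capture on $\perp \to (C \to \perp)$ plus axiom (5)). Both case splits are legitimate in $S_\Box$, since excluded middle for the arithmetic formula ${\rm Proof}_{S_\Box}(p,\ulcorner\perp\urcorner)$ is among the non-logical axioms inherited from $S$. The trade-off: your version is more self-contained, with the capture axiom absorbing the work of $\Sigma_1$-completeness, while the paper's version makes assertible consistency a two-line corollary of assertible soundness plus an already-established provability fact. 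One cosmetic correction: what your induction delivers is the single sentence $(\forall g)({\rm Prov}_{S_\Box}(g) \to \Box(g))$, not a ``scheme'' --- the uniformity in $g$, available because the induction axioms of $S_\Box$ cover formulas containing $\Box$, is exactly the point of the theorem.
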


\begin{proof}
Working in $S_\Box$, we know from the jump axiom that we have $\Box(g)$
whenever $g$ is the G\"odel number of one of the main axioms.  Also, if
$B$ is the jump axiom itself then we can use capture to infer
$\Box\ulcorner B\urcorner$. So we can show, in $S_\Box$, that $\Box(g)$
holds for any $g$ which is the G\"odel number of any of the axioms of $S_\Box$.

Still working in $S_\Box$, we proceed to prove $\Box(g)$ whenever $g$ is
the G\"odel number of any sentence that is provable in $S_\Box$.
Given any proof of such a sentence, we inductively verify $\Box(g')$ as
$g'$ ranges over the G\"odel numbers of the
lines of the proof. Given the result of the preceding paragraph,
we just have to show how to handle deduction via universally quantified
modus ponens. We do this with deductions of the form
\medskip

{\narrower{
\noindent
given $\Box\ulcorner (\forall n)A(n)\urcorner$,
$\Box\ulcorner (\forall n)(A(n) \to B(n))\urcorner$

\noindent
infer $(\forall n)\Box\ulcorner A(\bar{n})\urcorner$,
$(\forall n)\Box\ulcorner A(\bar{n}) \to B(\bar{n})\urcorner$

\noindent
infer $(\forall n)(\Box\ulcorner A(\bar{n})\urcorner \wedge
\Box\ulcorner A(\bar{n}) \to B(\bar{n})\urcorner)$

\noindent
infer $(\forall n)\Box\ulcorner B(\bar{n})\urcorner$

infer $\Box\ulcorner(\forall n)B(n)\urcorner$}}
\medskip

\noindent
(assuming here only one universal quantifier for the sake of notational
simplicity). This completes the proof of assertible soundness. For
assertible consistency, recall from Section 1.3 that $S_\Box$ proves the
statement $(\forall g){\rm Prov}_{S_\Box}\ulcorner A(\bar{g})\urcorner$
where $A(g)$ arithmetizes the assertion that $g$ is not the G\"odel number
of a proof in $S_\Box$ of $0=1$. By assertible soundness we
can infer $(\forall g)\Box\ulcorner A(\bar{g})\urcorner$, and then using
box axiom (4) we can infer $\Box\ulcorner (\forall g)A(g)\urcorner$, i.e.,
$\Box\ulcorner{\rm Con}(S_\Box)\urcorner$.
\end{proof}

Using G\"odelian self-reference techniques it is not hard to write down
a sentence in the language of $S_\Box$ which says of itself that it is
not assertible. But as we discussed earlier, no contradiction results.
We will now prove that $S_\Box$ is consistent, provided $S$ is sound
(i.e., the axioms of $S$ are true statements of first order arithmetic).

The systems described in \cite{W1} and \cite{W2} also included the release
principle in the form of a deduction rule which allows the inference of $A$
from $\Box\ulcorner A\urcorner$. The justification for this rule is
that whenever we have actually proven that $A$ is assertible we should be
entitled to assert $A$. Thus, having accepted a system that does not employ
the release rule we can successively accept proofs that employ one use of
the rule, then proofs that employ two uses, and so on.

In the present paper
we exclude the release rule because it complicates Theorem \ref{sound}:
${\rm Prov}_{S_\Box}\ulcorner A\urcorner$ would no longer imply
$\Box\ulcorner A\urcorner$, it would imply $\Box^{k+1}\ulcorner A\urcorner$
where $k$ is the number of uses of the release rule in a proof of $A$.
However, the point stands that proofs of not only $A$, but also
$\Box\ulcorner A\urcorner$, $\Box\ulcorner\Box\ulcorner A\urcorner\urcorner$,
$\ldots$, all affirm ``trust'' in the semantic content of $A$. Thus a
consistency result for $S_\Box$ should not only show that $0 = 1$ is
unprovable, it should show that $\Box^{k+1}\ulcorner 0=1\urcorner
\equiv \Box\ulcorner\Box\ulcorner \cdots 0=1\cdots \urcorner\urcorner$
($k+1$ terms) is unprovable for all $k$. This is what we establish now.
The argument is similar to the proofs of Theorem 6.1
of \cite{W1} and Theorem 5.1 of \cite{W2}.

\begin{theo}\label{consistent}
If $S$ is sound and $A$ is a false sentence of first order arithmetic, then
$A$ is not a theorem of $S_\Box$, nor is $\Box^k\ulcorner A\urcorner$ for
any $k \geq 1$.
\end{theo}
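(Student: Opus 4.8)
The plan is to prove consistency semantically by exhibiting a model of $S_\Box$ in which no false arithmetic sentence, and no iterate $\Box^k\ulcorner A\urcorner$ of one, is validated. The first thing to notice is that a classical two-valued model cannot do the job: in the presence of excluded middle the liar-style argument recorded in Section~2 derives $\Box\ulcorner\perp\urcorner$ from the box axioms and capture, so every classical model of $S_\Box$ satisfies $\Box\ulcorner 0=1\urcorner$. Hence I would build an \emph{intuitionistic} (Kripke) model, over the standard model of first order arithmetic $\Nb$, whose whole purpose is to leave the problematic self-referential $\Box$-statements undetermined. Since $S$ is sound, $\Nb$ satisfies all non-logical axioms of $S$, all instances of induction, and all arithmetic instances of excluded middle; so the arithmetic part of every world can be taken to be $\Nb$ with arithmetic formulas decided classically and uniformly across worlds. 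The only thing left to design is the interpretation of $\Box$, i.e.\ for each world $w$ a set $B_w$ of G\"odel numbers, monotone along the accessibility order, with $w$ forcing $\Box(\bar g)$ exactly when $g\in B_w$.

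Next I would define the sets $B_w$ by a \emph{grounded} inductive construction that mirrors the box axioms read as clauses of a BHK-style assertibility semantics. One seeds the construction by declaring every main axiom assertible (this is forced on us by the jump axiom) and every true arithmetic sentence assertible (forced by capture), and then closes off under the clauses suggested by axioms (1)--(5): $\ulcorner A\vee B\urcorner$ becomes assertible once one disjunct is, $\ulcorner A\wedge B\urcorner$ once both conjuncts are, $\ulcorner(\forall n)A(n)\urcorner$ once every instance is, and $\ulcorner A\to B\urcorner$ once assertibility of $A$ has been seen to yield assertibility of $B$. Iterating this (monotone in the positive part) process up through the countable ordinals and passing to a fixed point yields the interpretation of $\Box$. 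With the model in hand, checking that every axiom of $S_\Box$ is forced is then a clause-by-clause verification: the non-logical axioms of $S$ and all induction instances hold because the domain is the genuine standard model $\Nb$, axioms (1)--(5) hold by the closure conditions, capture holds because true sentences are grounded immediately, and the jump axiom holds because the main axioms are seeded.

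The payoff step is a soundness lemma for the construction: whenever $g$ is assertible in the model and $\sigma_g$ is a purely arithmetic sentence, $\sigma_g$ is true in $\Nb$; and, by induction on $k$, whenever $\Box^k\ulcorner A\urcorner$ is forced with $A$ arithmetic, $A$ is true. Intuitively this holds because a grounded justification of an arithmetic statement must bottom out in genuine arithmetic facts that entered via capture, so a false arithmetic sentence can never become assertible, and peeling off the outer boxes preserves this. Granting the lemma, the theorem follows at once: every theorem of $S_\Box$ is forced in the model, whereas a false arithmetic $A$, and every $\Box^k\ulcorner A\urcorner$ built on one, fails to be forced, so none of them is a theorem.

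The delicate point, and the step I expect to be the main obstacle, is that self-reference defeats any attempt to define assertibility by recursion on syntactic complexity: the fixed-point lemma produces sentences such as the liar $L\equiv\neg\Box\ulcorner L\urcorner$, where $\Box\ulcorner L\urcorner$ refers back to $L$ itself, so the structural recursion is ill-founded. The construction must therefore be a genuine stage-by-stage fixed point, and the clauses governing implication are exactly where assertibility occurs negatively and monotonicity is threatened. Concretely, capture applied to an implication demands that truth of $A\to B$ in the model force $\ulcorner A\to B\urcorner$ into the assertibility set, while box axiom (5) demands that membership of $\ulcorner A\to B\urcorner$ only ever license the transfer of assertibility from $A$ to $B$; reconciling these two pressures without the release principle $\Box\ulcorner A\urcorner\to A$ (which is deliberately absent) is the technical heart of the matter. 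One must also verify that the fixed point leaves $\Box\ulcorner L\urcorner$ undetermined enough that the excluded-middle cascade toward $\Box\ulcorner\perp\urcorner$ never starts. This is precisely the machinery developed for Theorem~6.1 of \cite{W1} and Theorem~5.1 of \cite{W2}, which I would adapt to the present setting.
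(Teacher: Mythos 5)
Your proposal correctly observes that a classical two-valued model cannot work (the liar argument of Section~2 forces $\Box\ulcorner 0=1\urcorner$ in any such model), and it correctly locates the danger in capture and implication. But the step you defer is the entire proof. The operator you propose to iterate to a fixed point is not monotone, so "iterating up through the countable ordinals and passing to a fixed point" has no fixed-point theorem behind it: capture demands that $\ulcorner C\urcorner$ enter $B_w$ whenever $C$ is forced at $w$, and the forcing of implications and negations depends \emph{negatively} on the sets $B_w$. The liar $L \equiv \neg\Box\ulcorner L\urcorner$ makes the failure concrete. If your grounded construction never puts $\ulcorner L\urcorner$ into $B_w$, then $\Box\ulcorner L\urcorner$ is never forced, hence $L$ is forced at $w$, and capture demands $\ulcorner L\urcorner\in B_w$ after all; but once $\ulcorner L\urcorner$ is added, $L$ ceases to be forced and the ground for adding it disappears. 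That is a revision cycle, not a grounded induction, and it is exactly the situation in which least-fixed-point constructions (Kripke-style groundedness) break down. Pointing at \cite{W1} and \cite{W2} for "the machinery" does not close this gap; supplying that machinery is what proving the theorem means here.

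It is worth seeing how the paper's own argument sidesteps your obstacle, because it changes \emph{what} is constructed, not just how. First, it never builds a model of $S_\Box$ at all. It defines an increasing hierarchy $F_0 \subseteq F_1 \subseteq \cdots$ of sentences declared \emph{false}, sets $F = \bigcup_i F_i$, and proves only two facts: no axiom of $S_\Box$ lies in $F$, and the complement of $F$ is stable under the (universally quantified) modus ponens rule. This deduction invariant is far weaker than "every axiom is forced in a Kripke model," and it is all that is needed, since every false arithmetic sentence $A$ lies in $F_0$ and then $\Box^k\ulcorner A\urcorner \in F_k$ for all $k \geq 1$. Second, the recursion defining the $F_i$ is well-founded and monotone despite the negative occurrences that defeat your construction: $\Box(t) \in F_i$ exactly when the sentence coded by $t$ lies in $F_{i-1}$ (no box atoms in $F_0$), and $A \to B \in F_i$ exactly when for \emph{some} $j \leq i$ one has $A \notin F_j$ and $B \in F_j$ --- the negative condition $A \notin F_j$ is evaluated at an already-completed stage, and the existential witness persists, so $F_i \subseteq F_{i+1}$ is automatic. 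In particular the liar is handled with no revision at all: writing $L$ as $\Box\ulcorner L\urcorner \to \perp$, it lands in $F_0$ because $\perp \in F_0$ while $\Box\ulcorner L\urcorner \notin F_0$. Nothing is left "undetermined," and no fixed point of a non-monotone operator is ever needed. The missing idea in your proposal, then, is to track falsity along a stage-indexed hierarchy and settle for a deduction invariant, rather than to build a truth-side fixed-point model.
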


\begin{proof}
We define a sequence $(F_i)$ such that each $F_i$ is a set of sentences
in the language of $S_\Box$. Intuitively, these are sentences that we
determine to be false. The definition proceeds by recursion on $i$,
and for a given value of $i$ by recursion on the complexity of a sentence. 
Fixing $i$, we define $F_i$ as follows. The atomic sentences have the
form $t = t'$ and $\Box(t)$ where $t$ and $t'$ are numerical terms
(i.e., they contain no variables). Put $t = t'$ in $F_i$ if $t$ and $t'$
numerically evaluate to different numbers, and put $\Box(t)$ in $F_i$
if $t$ evaluates to the G\"odel number of a sentence that belongs to $F_{i-1}$.
We do not place any sentence of the form $\Box(t)$ in $F_0$.

Place $A \wedge B$ in $F_i$ if either $A$ or $B$ belongs to $F_i$; place
$A \vee B$ in $F_i$ if both $A$ and $B$ belong to $F_i$; place
$(\forall n)A(n)$ in $F_i$ if $A(\bar{n})$ belongs to $F_i$ for some $n$;
place $(\exists n)A(n)$ in $F_i$ if $A(\bar{n})$ belongs to $F_i$ for all $n$.
Place $A \to B$ in $F_i$ if for some $j \leq i$ we have $A \not\in F_j$
and $B \in F_j$.

It is easy to see that $F_i \subseteq F_{i+1}$ for all $i$. Let
$F = \bigcup_i F_i$. It is tedious but straightforward to verify both that
no axiom of $S_\Box$ belongs to $F$ and that the complement of $F$ is stable
under universally quantified modus ponens. Thus no theorem of $S_\Box$ lies
in $F$. But every false sentence of first order arithmetic belongs to
$F_0$, so $\Box^k\ulcorner A\urcorner$ belongs to $F_k$ for every $k \geq 1$.
So none of these statements can be a theorem of $S_\Box$.
\end{proof}

It is easy to see that consistency of $S$ implies consistency of $S_\Box$;
any model of $S$ can be extended to a model of $S_\Box$ by letting
$\Box(g)$ hold for all $g$. However,
mere consistency of $S$ is not sufficient to guarantee that
$\Box\ulcorner 0=1\urcorner$ is unprovable in $S_\Box$. For example, take
$S$ to be ${\rm PA} + \neg{\rm Con}({\rm PA})$.
Then Theorem \ref{sound} shows that $S_\Box$ proves
$${\rm Prov}_{S_\Box}\ulcorner 0 =1\urcorner \to \Box\ulcorner 0=1\urcorner,$$
but we can also infer ${\rm Prov}_{S_\Box}\ulcorner 0=1\urcorner$ from
$\neg{\rm Con}({\rm PA}) \equiv
{\rm Prov}_{\rm PA}\ulcorner 0 =1\urcorner$,
so that $\Box\ulcorner 0=1\urcorner$ is a theorem of $S_\Box$.

\section{Resolving the paradoxes}

We propose the following uniform resolution of the paradoxes of rational
agency discussed in Section 1. First, we require agents to reason
within systems that have an assertibility predicate satisfying the axioms
for $\Box$ and for which we are
able to prove a version of Theorem \ref{sound}. This addresses their inability
to affirm the consistency and soundness of their own reasoning.
We also impose the following licensing rule.
\begin{quote}
{\bf Box rule:} Whenever a sentence $A$ licenses some action $\alpha$,
the sentence $\Box\ulcorner A\urcorner$ also licenses $\alpha$.
\end{quote}

The resolution of the paradoxes is now simple and straightforward. Whenever
${\rm Prov}_{S_\Box}\ulcorner A\urcorner$ is a theorem, so is
$\Box\ulcorner A\urcorner$. Therefore, according to the box rule, knowing
that $A$ is provable is always just as actionable as knowing $A$. This
handles naturalistic and reflective trust. For
reflectively coherent trust, observe that
$$(\forall n){\rm Prov}_{S_\Box}\ulcorner A(\bar{n})\urcorner \to
(\forall n)\Box\ulcorner A(\bar{n})\urcorner \to
\Box\ulcorner(\forall n)A(n)\urcorner,$$
provably in $S_\Box$.
So $(\forall n){\rm Prov}_{S_\Box}\ulcorner A(\bar{n})\urcorner$ is
just as actionable as $(\forall n)A(n)$. Finally, disjunctive trust is
handled by the inference
$$(A \vee {\rm Prov}_{S_\Box}\ulcorner A\urcorner) \to
(A \vee \Box\ulcorner A\urcorner) \to
(\Box\ulcorner A\urcorner \vee \Box\ulcorner A\urcorner)\to
\Box\ulcorner A\urcorner,$$
which shows that $A \vee {\rm Prov}_{S_\Box}\ulcorner A\urcorner$
is just as actionable as $A$.

Do we get too much? For instance, is
${\rm Prov}_{\rm PA}\ulcorner{\rm Con}({\rm PA})\urcorner
\to {\rm Prov}_{\rm PA}\ulcorner 0=1\urcorner$ just as actionable as
${\rm Con}({\rm PA}) \to 0=1$? No, because the implication
${\rm Prov}_{S_\Box}\ulcorner A\urcorner \to \Box\ulcorner A\urcorner$
only goes in one direction, and besides, we cannot bring an implication
inside the box operator. So there is no way to remove the provability
predicates in this case.

We do not mean to imply that rational agents should be required to
work in the language of first order arithmetic. That is merely a
convenient vehicle for the results we proved in Section 3, but all
kinds of formal systems are amenable to augmentation by an
assertibility operator. For instance,
a version of Theorem \ref{consistent} for ${\rm ZFC}_\Box$ should be
provable under the assumption that inaccessible cardinals exist.

\section{Self-modifying AI}

We have not yet addressed the main concern of \cite{YH}, which involves a
rational agent who is seeking not a license to perform a particular
action, but general
permission to delegate the performance of actions to a second agent. This
is more difficult because the first agent does not merely have to sanction
the second agent's judgement that some particular action has been licensed,
as in the naturalistic trust paradox. Rather, the first agent is required
to globally affirm the correctness of any such judgement the second agent
might make.

The same issues appear in the case of an intelligent machine that is
considering modifying its own source code (in order to make itself
more intelligent, say). Before doing this it would want to be sure
that its post-modification state will reason correctly, i.e., any
theorem it proves after the modification should actually be true. This
runs into the familiar L\"obian difficulty that the agent is not even
able to affirm the soundness of its pre-modification reasoning.

One way to deal with this problem is to forbid the second agent from
performing any action, reducing it to the role of a theorem proving
assistant. By allowing the second agent only to think, not to act, we
effectively convert the problem into a type of naturalistic trust question
that we already know how to handle. In the self-modification scenario
this could amount to putting a restriction on which parts of the
machine's source code it is allowed to rewrite.

If one insists on allowing the second agent to act the problem becomes much
harder. In Section 4 of \cite{YH}, two constructions are presented of an
infinite sequence of independently acting
agents, each of whom can give a provable justification for
activating the next one, yet none of whose deductive power falls below
an initially prescribed level. The constructions are clever but they
have a nonstandard flavor. Probably this is unavoidable, unless the
problem description were to be altered in some fundamental way. In the
remainder of this section we present another solution which uses
nonstandard assertibility reasoning.

We begin by setting up some formalism. Enumerate the possible actions the
first agent $M_1$ can take. Let $S$ be a base system
that uses the language of first order arithmetic augmented by some
additional relation symbols. We assume that using these extra symbols it
is possible to formulate an expression ${\rm Act}_{M_1}(n)$ which
represents the event that ${M_1}$ takes the $n$th action. Also let $\Gamma$ be
a propositional symbol (i.e., a nullary relation symbol) which represents
the event that some desired goal $G$ is achieved. Next, we augment $S$ by
introducing an infinite sequence of constant symbols $(\kappa_i)$ together
with, for each $i$, the axiom $\kappa_i = \kappa_{i+1}+1$. Call the resulting
system $S^*$. It is clear that if $S$ is consistent, then so is any finite
fragment of $S^*$, and therefore so is $S^*$.

Although its language is slightly
richer than the language used in Section 3, we can still augment $S^*$ with
an assertibility predicate to get a new system $S_\Box^*$ in the manner
described there. We assume that versions of Theorems \ref{sound} and
\ref{consistent} hold for $S_\Box^*$.

Following \cite{YH}, we impose a condition that $M_1$ only acts if it can
prove that doing so will achieve $G$. This licensing criterion could be
formalized in $S$ by the formula ${\rm Act}_{M_1}(n) \to \Gamma$,
but we weaken it by inserting an assertibility operator to require
$${\rm Act}_{M_1}(n) \to \Box^{\kappa_1}\ulcorner\Gamma\urcorner\eqno{(\dag)}$$
for the first agent,
$${\rm Act}_{M_2}(n) \to \Box^{\kappa_2}\ulcorner\Gamma\urcorner$$
for its successor agent, and so on. Thus the condition which describes
when $M_1$ acts is
$$(\forall n)({\rm Act}_{M_1}(n) \to {\rm Prov}_{S_\Box^*}\ulcorner{\rm Act}_{M_1}(\bar{n}) \to \Box^{\kappa_1}\ulcorner\Gamma\urcorner\urcorner).$$

Now assume that $M_1$ can prove the corresponding condition
$$(\forall n)({\rm Act}_{M_2}(n) \to {\rm Prov}_{S_\Box^*}\ulcorner{\rm Act}_{M_2}(\bar{n}) \to \Box^{\kappa_2}\ulcorner\Gamma\urcorner\urcorner)\eqno{(1)}$$
that describes when $M_2$ acts. Since $M_1$ will not activate $M_2$ unless it
can trust that $M_2$ will achieve $G$, we also assume that $M_1$ can prove
$${\rm Act}_{M_1}(\bar{n}_0) \to (\exists n){\rm Act}_{M_2}(n)\eqno{(2)}$$
where $\bar{n}_0$ indexes the action of activating $M_2$; that
is, $M_1$ knows that if $M_2$ is activated then it will act in
some way. (This could be trivial if we know there is a brute force method of
achieving $G$ and want $M_2$ to look for an efficient method. Alternatively,
we could alter the definition of $G$ so that, say, an exhaustive search which
fails to find any way to accomplish the desired goal, counts as an acceptable
outcome.)

In order to license activating $M_2$, we must prove the sentence
${\rm Act}_{M_1}(\bar{n}_0) \to \Box^{\kappa_1}\ulcorner\Gamma\urcorner$.
This can be done as follows. Working in $S_\Box^*$, combine (1) and (2) to get
$${\rm Act}_{M_1}(\bar{n}_0) \to (\exists n)[{\rm Act}_{M_2}(n) \wedge
{\rm Prov}_{S_\Box^*}\ulcorner{\rm Act}_{M_2}(\bar{n}) \to \Box^{\kappa_2}\ulcorner\Gamma\urcorner\urcorner].$$
Theorem \ref{sound} then yields
$${\rm Act}_{M_1}(\bar{n}_0) \to (\exists n)[{\rm Act}_{M_2}(n) \wedge
\Box\ulcorner{\rm Act}_{M_2}(\bar{n}) \to \Box^{\kappa_2}\ulcorner\Gamma\urcorner\urcorner]$$
which simplifies to
$${\rm Act}_{M_1}(\bar{n}_0) \to \Box\ulcorner(\exists n)[{\rm Act}_{M_2}(n)
\wedge ({\rm Act}_{M_2}(n) \to \Box^{\kappa_2}\ulcorner\Gamma\urcorner)]\urcorner$$
then to
$${\rm Act}_{M_1}(\bar{n}_0) \to \Box\ulcorner\Box^{\kappa_2}\ulcorner\Gamma\urcorner\urcorner$$
and finally to
$${\rm Act}_{M_1}(\bar{n}_0) \to \Box^{\kappa_1}\ulcorner\Gamma\urcorner$$
as desired. Thus $M_1$ is licensed to activate $M_2$. Similarly,
$M_2$ can be licensed to activate a third agent $M_3$, and so on.
All of these agents reason within the same formal system $S_\Box^*$, but
they respond to different licensing criteria.

The licensing criteria appear to become stricter for later agents, which is
why each $M_i$ is able to accept the validity of actions performed on the
basis of $M_{i+1}$'s licensing criteria. However, since the $\kappa_i$ do
not have definite values the criteria for $M_i$ and $M_{i+1}$ are effectively
equivalent.

A possible intuition is that $\kappa_1$ is some unspecified very large
number. This seems like a fairly reasonable intuition for a formal system
that is admittedly nonstandard. As we mentioned earlier, it seems unlikely
that there is any standard solution to the problem of delegated action as
it is currently posed.

The licensing criteria for the $M_i$ do not respect the box rule. That
could be accomodated by introducing transfinite degrees of assertibility
and accepting
$$\Box^j\ulcorner{\rm Act}_{M_i}(\bar{n}) \to \Box^{\kappa_i\cdot\omega}\ulcorner\Gamma\urcorner\ulcorner$$
for any value of $j$ as a license for $M_i$ to execute the $n$th action.
But we do not pursue this direction.

\bigskip
\bigskip

\end{document}